\newtheorem{theorem}{Theorem}
\newtheorem{example}{Example}
\theoremstyle{definition}
\newtheorem{definition}{Definition}
\newtheorem{remark}{Remark}
\newcommand{\g}{\mathfrak{g}}
\title{Adjoint algebraic groups as automorphism groups of a projector on a central simple algebra}
\author{Viktor A. Petrov, Andrei V. Semenov\footnote{The first author was supported by Young Russian Mathematics award and by RFBR grant 18-31-20044. The second author was supported by ``Native Towns'', a social investment program of PJSC ``Gazprom Neft''}}
\begin{document}

\maketitle

\abstract{We show that any adjoint absolutely simple linear algebraic group over a field of characteristic zero is the automorphism group of some projector on a central simple algebra. Projective homogeneous varieties can be described in these terms; in particular, we reproduce quadratic equations by Lichtenstein defining them.}

\bigskip

Orthogonal and symplectic groups can be defined as automorphism groups of a symmetric (or skew-symmetric) form, which can be viewed as a tensor of valency (0,2) on vector representation. Gordeev and Popov proved that any affine algebraic group is the automorphism group of some tensor of valency (1,2) on some representation (see \cite{6}). In this paper we show that in the case of adjoint absolutely simple groups one can choose a tensor of valency (2,2) in any absolutely irreducible representation, which is closely related to the Casimir operator. A similar result was proven by Brown in the case of spinor representation (see \cite{2}). This tensor naturally arises in theory of knot invariants (see \cite{1}) and in the classical invariant theory (see \cite{7}).

From another hand-side Weyl found that any adjoint classical group can be viewed as the automorphism group of a central simple algebra with an involution \cite{9}. Garibaldi shows (see \cite{4}) that adjoint groups of type $E_7$ can be obtained as the automorphism group of some endomorphism on a central simple algebra of degree $56$. One can identify the image of this endomorphism with the corresponding Lie algebra. Moreover, projective homogeneous varieties can be identified with varieties of right ideals of this algebra related with the endomorphism. 

We generalize these results on the case of any absolutely irreducible projective representation. As a consequence we obtain a new proof of Lichtenstein theorem (see \cite{8}) on equations defining projective homogeneous varieties. 

Let $\g$ be an absolutely simple Lie algebra over a field $K$ of characteristic $0$, $G^{ad} = Aut(\g)^\circ$ be the corresponding adjoint group, and let $(-,-)$ be its Killing form inducing a natural isomorphism $\kappa : \g \longrightarrow \g^*$. Fix an absolutely irreducible representation $\rho : \g \longrightarrow End(V)$. If $\alpha: V \otimes V^{*} \longrightarrow End(V)$ stands for a canonical isomorphism, then the conjugate map $\alpha^*$ gives us an isomorphism between $End(V)^*$ and $V \otimes V^*$.

\begin{definition} Define the map $\pi:End(V) \to End(V)$ as the composition
$$
\xymatrix{
End(V)\ar^-{(\alpha\alpha^*)^{-1}}[r]& End(V)^*\ar[r]^-{\rho^*}&\g^*\ar[r]^-{\kappa^{-1}}&\g\ar[r]^-{\rho}& End(V).
}
$$
\end{definition}

\begin{definition} The image of $\pi$ in $End(V\otimes V)$ by the isomorphism 
$$
End(V)^*\otimes End(V) \simeq End(V)\otimes End(V)\simeq End(V\otimes V)
$$
will be denoted $t$.
\end{definition}

\begin{theorem}
\begin{enumerate}
    \item $\pi$ is a $\g$-equvariant map.
    \item $Im$ $\pi$ = $\rho(\g)$, and, moreover, there exist a constant $c\in K^\times$ such that $\pi^2 =c\cdot \pi$.
    \item If $Aut(\pi)$ is the set of all $f\in PGL(V)$ such that $\pi(f\varphi f^{-1})=f\pi(\varphi)f^{-1}$ for all $\varphi\in End(V)$, then $G^{ad}=Aut(\pi)^\circ$.
    \item $t$ is equal to the image of the Casimir operator of $\g$. 
\end{enumerate}
\end{theorem}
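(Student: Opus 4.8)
The four claims should be attacked in the order they appear, since each builds on the previous. For (1), the maps $\alpha\alpha^*$, $\rho^*$, $\kappa^{-1}$, $\rho$ are each $\g$-equivariant: $\alpha$ is the canonical isomorphism $V\otimes V^*\to\mathrm{End}(V)$ and both sides carry the natural $\g$-action, so $\alpha$ and hence $\alpha^*$ and $(\alpha\alpha^*)^{-1}$ intertwine; $\rho^*$ is dual to the equivariant $\rho$; and $\kappa^{-1}$ is equivariant because the Killing form is invariant. A composition of equivariant maps is equivariant, giving (1).

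Let me think about the structure here. $\pi$ as defined is a map $\mathrm{End}(V)\to\mathrm{End}(V)$. It factors through $\g^*$ then $\g$ then $\mathrm{End}(V)$, so its image is contained in $\rho(\g)$. Since $\rho$ is absolutely irreducible, $\g$ acts on $\mathrm{End}(V)$ and the trace form (or the map $\varphi\mapsto \mathrm{tr}(\rho(x)\varphi)$) is nondegenerate on $\rho(\g)$; tracking through the definitions, $\pi$ restricted to $\rho(\g)$ should be multiplication by a nonzero scalar. The cleanest route: identify $\pi$ up to the isomorphisms with the composite $\mathrm{End}(V)\xrightarrow{\text{proj}}\g\xrightarrow{\rho}\mathrm{End}(V)$ where the projection is adjoint (with respect to trace form) to $\rho$, times a normalization coming from the Killing form versus trace form discrepancy. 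Absolute irreducibility forces the relevant pairing to be a nonzero multiple of the identity on $\rho(\g)$ by Schur, so $\pi^2=c\pi$ with $c\ne 0$, and $\mathrm{Im}\,\pi=\rho(\g)$. This gives (2).

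Claim (3): since $\pi$ is $\g$-equivariant, the connected group $G^{ad}$ preserves $\pi$, so $G^{ad}\subseteq\mathrm{Aut}(\pi)^\circ$. Conversely, any $f\in\mathrm{Aut}(\pi)$ preserves $\mathrm{Im}\,\pi=\rho(\g)$ by (2); conjugation by $f$ then gives an automorphism of the Lie algebra $\rho(\g)\cong\g$ (it respects the bracket because it comes from conjugation in $\mathrm{End}(V)$, hence is a Lie algebra automorphism of $\mathrm{End}(V)$ preserving the subalgebra $\rho(\g)$), so there is a homomorphism $\mathrm{Aut}(\pi)\to\mathrm{Aut}(\g)$; restricting to identity components and noting that $\rho$ realizes $G^{ad}\to PGL(V)$ as a closed immersion (absolute irreducibility plus adjointness), one gets $\mathrm{Aut}(\pi)^\circ\subseteq G^{ad}$. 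Some care is needed that the map $\mathrm{Aut}(\pi)\to\mathrm{Aut}(\g)$ has trivial (or at least finite) kernel and that identity components match — this is where absolute irreducibility (Schur: the kernel of $PGL(V)\to\mathrm{Aut}(\rho(\g))$ is trivial) does the work.

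**The main obstacle and claim (4).** The real content is (4): identifying $t$ with the image of the Casimir element. Write $\{x_i\}$ and $\{x^i\}$ for dual bases of $\g$ with respect to the Killing form, so the Casimir is $\sum_i x_i\otimes x^i\in\g\otimes\g$, with image $\sum_i\rho(x_i)\otimes\rho(x^i)$ in $\mathrm{End}(V)\otimes\mathrm{End}(V)\cong\mathrm{End}(V\otimes V)$. On the other side, unwinding Definition 2: $t$ is the image of $\pi\in\mathrm{End}(V)^*\otimes\mathrm{End}(V)$ under the displayed isomorphisms. Using $(\alpha\alpha^*)^{-1}$ to identify $\mathrm{End}(V)^*$ with $V\otimes V^*$ and then with $\mathrm{End}(V)$, the claim is that this transported element is exactly $\sum_i\rho(x_i)\otimes\rho(x^i)$. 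The plan is a direct index computation: pick a basis of $V$, write $\alpha$, $\alpha^*$, $\rho$, $\kappa$ in coordinates, and check that contracting $\pi$ against the trace pairing reproduces the Casimir — the key input being that $\kappa^{-1}$ is precisely the inverse Killing form, which is what makes $\sum_i x_i\otimes x^i$ the ``inverse'' of $(-,-)$. I expect the bookkeeping of which copy of $\mathrm{End}(V)$ is dualized (and the resulting transpose) to be the one genuinely error-prone point; organizing it as a statement about the canonical element of $\g^*\otimes\g$ pushed forward along $\rho\otimes\rho$, rather than in raw coordinates, should keep it clean.
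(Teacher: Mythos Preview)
Your proposal is correct and follows essentially the same route as the paper: equivariance of each factor for (1); Schur's lemma applied to the $\g$-endomorphism $\kappa^{-1}\rho^*(\alpha\alpha^*)^{-1}\rho\colon\g\to\g$ (equivalently to $\pi|_{\rho(\g)}$) for (2); and for (3) the injection $\mathrm{Aut}(\pi)\hookrightarrow\mathrm{Aut}(\g)$ via conjugation on $\mathrm{Im}\,\pi=\rho(\g)$, with trivial kernel by absolute irreducibility, sandwiching $\mathrm{Aut}(\pi)^\circ$ between $G^{ad}$ and $\mathrm{Aut}(\g)^\circ=G^{ad}$. For (4) the paper does exactly what you describe at the end as the cleaner alternative to an index computation: it records the ``canonical element of $\g^*\otimes\g$ pushed forward along $\rho\otimes\rho$'' argument as a commutative diagram comparing the coevaluations $K\to\mathrm{End}(V)\otimes\mathrm{End}(V)^*$ and $K\to\g\otimes\g^*$ and then applying $id\otimes\kappa^{-1}$ and $id\otimes\rho$, so no coordinate bookkeeping is needed.
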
 
\begin{proof}
The first statement follows from the fact that all maps in the definition of $\pi$ are $\g$-equivariant. Moreover, 
$$
\pi^2=\rho\kappa^{-1}\rho^*(\alpha\alpha^*)^{-1}\rho\kappa^{-1}\rho^*(\alpha\alpha^*)^{-1},
$$
and a factor $\kappa^{-1}\rho^*(\alpha\alpha^*)^{-1}\rho$ is a $\g$-equivariant map from $\g$ to $\g$, so there exists $c \in K \setminus \{0\}$ such that $\kappa^{-1}\rho^*(\alpha\alpha^*)^{-1}\rho = c \cdot id_{\g}$, hence the second statement holds. \par
In order to prove item 3 note that for any $f\in Aut(\pi)$ the automorphism $Int_f$ preserves $Im$ $\pi$ = $\rho(\g)$, so one can define a homomorphism
$$
Aut(\pi)\to Aut(\g).
$$
If some $f$ lies in the kernel of this homomorpism then it commutes with $\rho(\g)$. But $V$ is absolutely irreducible, so $f$ must be trivial in $PGL(V)$. Hence $Aut(\pi)\le Aut(\g)$. But $\pi$ is $G$-equivariant, so 
$$
G^{ad}=Aut(\g)^\circ\le Aut(\pi)\le Aut(\g),
$$
and statement 3 holds.

Finally, statement 4 easily follows from the following commutative diagram:
$$
\xymatrix{
K\ar[r]\ar[d]& End(V)\otimes End(V)^*\ar[d]^-{id\otimes\rho^*}\\
\g\otimes\g^*\ar[r]^-{\rho\otimes id}\ar[d]^-{id\otimes\kappa^{-1}}& End(V)\otimes\g^*\ar[d]^-{id\otimes\kappa^{-1}}\\
\g\otimes\g\ar[r]^-{\rho\otimes  id}\ar[d]^-{\rho\otimes\rho}& End(V)\otimes\g\ar[d]^-{id\otimes\rho}\\
End(V)\otimes End(V)\ar[r]^-{id}& End(V)\otimes End(V).
}
$$
\end{proof}

Faulkner defines a map
$D:V \otimes V^* \rightarrow \g$ by the following rule:
$$(D(v \otimes a), x) = a(\rho(x)v),$$
where $v\in V, a\in V^*$ and $x \in \g$.
It is easy to see that $\pi=\rho D\alpha^{-1}$.

\begin{definition}
By an \emph{inner ideal} we mean a subspace $M$ in $V$, such that $D(m,\varphi)n$ lies in $M$ for any $m,n\in M$ and $\varphi\in V^*$.
\end{definition}

\begin{remark} We can rewrite the definition in terms of $\pi$: $\pi(M\otimes V^*)(M)\le M$.
\end{remark}
Description of all inner ideal of dimension $1$ leads us to Lichtenstein's theorem.

\begin{theorem}[Lichtenstein] Let $V$ be a representation of the highest weight $\lambda$, let $v_0$ be its corresponding weight vector, and let $v$ be a vector from $V$. The following statements are equivalent:
\begin{enumerate}
\item $Kv$ lies in the orbit $Kv_0$ under the action of $G$ on $\mathbb{P}(V)$;
\item $t \left(v\otimes v \right) = (2\lambda+2\delta, 2\lambda) \cdot (v\otimes v)$, where $\delta$ is equal to the half of the sum of all positive roots;
\item $Kv$ is an inner ideal in $V$.
\end{enumerate}
\end{theorem}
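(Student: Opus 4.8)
The plan is to prove $(1)\Rightarrow(2)\Rightarrow(1)$ together with $(2)\Leftrightarrow(3)$, using throughout item (4) of the first theorem: $t$ is the Casimir operator of $\g$ on $V\otimes V$, so it acts on the $V_{\mu}$-isotypic component of $V\otimes V$ by the scalar $(\mu+2\delta,\mu)$. Two preliminaries organize everything. First, if $\mu\ne 2\lambda$ is a dominant weight occurring in $V\otimes V$, then $2\lambda-\mu\in\mathbb Z_{\ge0}\Phi^{+}\setminus\{0\}$, so $(2\lambda+2\delta,2\lambda)-(\mu+2\delta,\mu)=(2\lambda-\mu,\,2\lambda+\mu+2\delta)>0$ because the second factor is a regular dominant weight; since moreover the Cartan component $V_{2\lambda}$ occurs with multiplicity one, the top eigenspace of $t$ on $S^{2}V$ is exactly $V_{2\lambda}$, so condition (2) says precisely $v\otimes v\in V_{2\lambda}$. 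Second, the projection $S^{2}V\twoheadrightarrow V_{2\lambda}$ sends no nonzero pure square $v\otimes v$ to $0$: otherwise $v$ would be a nonzero common zero of the $G$-submodule of quadratic forms annihilating $V_{2\lambda}$, whose zero cone, being $G$-stable and nonzero, would contain the unique closed orbit $X=G\cdot[v_{0}]$ and hence $v_{0}$, contradicting $0\ne v_{0}\otimes v_{0}\in V_{2\lambda}$. Consequently $v\otimes v\in V_{2\lambda}$ is also equivalent to $v\otimes v$ being an eigenvector of $t$ at all.

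\textbf{$(2)\Leftrightarrow(3)$ and $(1)\Rightarrow(2)$.} I would use Faulkner's $D$ and the Remark: $Kv$ is an inner ideal iff $\rho(D(v\otimes\varphi))v\in Kv$ for every $\varphi\in V^{*}$. Expanding $D(v\otimes\varphi)=\sum_{i}\varphi(\rho(x^{i})v)\,x_{i}$ in dual bases $\{x_{i}\},\{x^{i}\}$ of $\g$, one sees that the assignment $\varphi\mapsto\rho(D(v\otimes\varphi))v$, read inside $\mathrm{Hom}(V^{*},V)\cong V\otimes V$, is the element $\sum_{i}\rho(x_{i})v\otimes\rho(x^{i})v$, which by the standard coproduct formula $t=2c_{\lambda}\,\mathrm{id}+2\sum_{i}\rho(x_{i})\otimes\rho(x^{i})$ on $V\otimes V$ ($c_{\lambda}$ the Casimir scalar of $V$) equals $\tfrac12\bigl(t(v\otimes v)-2c_{\lambda}\,v\otimes v\bigr)$. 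Since $\sum_{i}x_{i}\otimes x^{i}$ is symmetric this element lies in $S^{2}V$, so its image being contained in the line $Kv$ forces it to be a scalar multiple of $v\otimes v$; thus $Kv$ is an inner ideal iff $v\otimes v$ is an eigenvector of $t$, i.e.\ (by the preliminaries) iff $v\otimes v\in V_{2\lambda}$, i.e.\ iff (2). The implication $(1)\Rightarrow(2)$ is then immediate: $Kv=Kgv_{0}$ gives $v\otimes v$ proportional to $g\cdot(v_{0}\otimes v_{0})$, and $v_{0}\otimes v_{0}$, being the weight-$2\lambda$ vector, generates the $G$-stable subspace $V_{2\lambda}$.

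\textbf{$(2)\Rightarrow(1)$, the main step.} Put $Q=\{v\in V:\ v\otimes v\in V_{2\lambda}\}$; it is a $G$-stable closed cone containing the cone $\widehat X$ over $X$, and I would prove $Q=\widehat X$ by a tangent-space count at $v_{0}$. Since $Q$ is the common zero locus of the quadrics annihilating $V_{2\lambda}$, one gets $T_{v_{0}}Q\subseteq\{w:\ v_{0}\otimes w+w\otimes v_{0}\in V_{2\lambda}\}$, and this set obviously contains $\g v_{0}=T_{v_{0}}\widehat X$ (apply $x\cdot(v_{0}\otimes v_{0})=\rho(x)v_{0}\otimes v_{0}+v_{0}\otimes\rho(x)v_{0}\in V_{2\lambda}$); the point is that it is no larger. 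Given $v_{0}\otimes w+w\otimes v_{0}\in V_{2\lambda}$, apply $t$, use $\Omega v_{0}=\Omega w=c_{\lambda}$ together with the coproduct formula, and contract the first tensor slot against the functional $a_{0}\in V^{*}$ dual to $v_{0}$: only the $x_{i}\in\mathfrak h$ terms survive on $v_{0}$, and a short computation reduces the eigenvalue equation to $\bigl(\rho(h_{\lambda})-(\lambda,\lambda)\,\mathrm{id}\bigr)w\in\mathfrak n^{-}v_{0}$, where $h_{\lambda}\in\mathfrak h$ represents $\lambda$. Since $\rho(h_{\lambda})-(\lambda,\lambda)$ acts on the weight space $V^{\mu}$ by $-(\lambda,\lambda-\mu)$, which vanishes on $V^{\lambda}=Kv_{0}$ and is nonzero for every other weight $\mu$ of $V$ (as $\mu\in\mathrm{conv}(W\lambda)$ forces $(\lambda,\mu)\le(\lambda,\lambda)$ with equality only for $\mu=\lambda$), this operator is invertible on $\bigoplus_{\mu\ne\lambda}V^{\mu}$ and preserves $\mathfrak n^{-}v_{0}$; writing $w=cv_{0}+w'$ with $w'$ in that sum yields $w'\in\mathfrak n^{-}v_{0}$, hence $w\in Kv_{0}+\mathfrak n^{-}v_{0}=\g v_{0}$, i.e.\ $T_{v_{0}}Q=\g v_{0}$. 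Finally each irreducible component of $Q$ is $G$-stable ($G$ connected), hence contains $\widehat X$ and in particular $v_{0}$, so has dimension at most $\dim T_{v_{0}}Q=\dim\widehat X$ while containing the irreducible $\widehat X$ of that same dimension; thus every component equals $\widehat X$ and $Q=\widehat X$, which is $(2)\Rightarrow(1)$.

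\textbf{Main obstacle.} I expect the tangent-space identity $\{w:\ v_{0}\otimes w+w\otimes v_{0}\in V_{2\lambda}\}=\g v_{0}$ — in effect, the statement that the Lichtenstein quadrics cut $\widehat X$ out with the correct tangent space at the base point — to be the only genuinely delicate point; once it and the two preliminary facts are in place, the rest is bookkeeping. One could instead try to deduce $(2)\Rightarrow(1)$ by showing that $v\otimes v\in V_{2\lambda}$ forces $[v\otimes v]$ into the closed orbit of $\mathbb P(V_{2\lambda})$ and then recovering $[v]\in X$ from $(gv_{0})\otimes(gv_{0})\sim v\otimes v$, but I do not see how to supply the needed input there without essentially the same computation, so I would commit to the tangent-space route.
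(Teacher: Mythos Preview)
Your argument is correct, but it takes a substantially different route from the paper's. The paper proves the short cycle $(1)\Rightarrow(2)\Rightarrow(3)\Rightarrow(1)$: the first implication is your easy one ($G$-equivariance of $t$ and the Casimir eigenvalue on $V(2\lambda)$; the paper's phrase ``Statement 1 follows from Statement 2'' is evidently a slip for the reverse, since otherwise the cycle does not close); the second is the one-line observation that $\pi(v\otimes a)(v)$ is the contraction of $t(v\otimes v)$ against $a$, so $t(v\otimes v)\in K\,(v\otimes v)$ forces $\pi(v\otimes a)(v)\in Kv$; and the decisive third step $(3)\Rightarrow(1)$ is not argued at all but simply quoted as \cite{3}*{Lemma~3.2}. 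Thus the entire content of your last section---the two preliminaries, the tangent-space identity $\{w:v_0\otimes w+w\otimes v_0\in V_{2\lambda}\}=\g\cdot v_0$, and the irreducible-component count---is replaced in the paper by a single citation to Faulkner. What your approach buys is a self-contained proof (close in spirit to Lichtenstein's original quadric argument), the sharper direct equivalence $(2)\Leftrightarrow(3)$, and the useful side fact that a nonzero $v\otimes v$ can be a $t$-eigenvector only for the top eigenvalue; what the paper's approach buys is brevity and a conceptual explanation of the Lichtenstein quadrics as the equations cutting out the one-dimensional inner ideals, whose identification with the closed $G$-orbit is Faulkner's earlier theorem.
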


\begin{proof}
Statement 1 follows from Statement 2, since $t$ is $G$-equvariant, and Casimir operator acts on irreducible representation $V(2\lambda)\le V(\lambda)\otimes V(\lambda)$ via multiplication by constant $(2\lambda+2\delta, 2\lambda)$.

If $t=\sum_i f_i\otimes g_i$, then $\pi (u\otimes a) = \sum_i a(f_i(u))g_i$. In particular, $\pi(v\otimes a)(v)$ is a convolution of $t(v\otimes v)$ and $a$. So, statement 3 follows from statement 2.

Finally, statement 1 follows from statement 3 by \cite{3}*{Lemma~3.2}.
\end{proof}

\begin{example}
Let $V$ be the adjoint representation of $\g$. The condition that a line $Kv$ belongs to the orbit of the highest weight vector can be written as
$$
\frac{1}{2}[v,[v,u]]=(v,u)v\text{ for all }u\in V.
$$
\end{example}

Indeed, one a calculation shows that
$$
\pi(v\otimes(u,-))=ad_{[v,u]},
$$
so item 3 from Theorem 2 can be written as 
$$
[v,[v,u]]=c(v,u)v,
$$
where $c(v,u)$ is a scalar depending $u$ and $v$. But $c$ is bilinear and $G$-equivariant, so it must be a scalar multiple of the Killing form. One can find the coefficient substituting $v=e_{\alpha_{max}}$, $u=e_{-\alpha_{max}}$, where $\alpha_{max}$ stands for the maximal root.

Now let us consider the case of non-trivial Tits algebras.

\begin{theorem}
Let $G^{ad}$ be an adjoint absolute simple algebraic group of inner type, let $A$ be a central simple algebra over $K$, and let $\rho:G^{ad}\to PGL_1(A)$ be an absolute irreducible projective representation. One can find a projector $\pi:A\to A$ (as a vector space), such that $G^{ad}=Aut(\pi)^\circ$, where $Aut(\pi) = \{ f\in PGL_1(A) \mid \pi(f\varphi f^{-1})=f\pi(\varphi)f^{-1} \text{ for all } \varphi\in A\}$. Moreover, the image of $\pi$ is a Lie subalgebra in $A$ isomorphic to $Lie(G^{ad})$.
\end{theorem}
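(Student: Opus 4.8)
The plan is to reduce to the split situation already handled in Theorem~1 by Galois descent. Fix a separable closure $\bar K$ of $K$ with $\Gamma=\mathrm{Gal}(\bar K/K)$. Because $G^{ad}$ is of inner type, over $\bar K$ we have $A\otimes_K\bar K\cong End(V)$ for some $\bar K$-space $V$, the group $(G^{ad})_{\bar K}=Aut(\g_{\bar K})^\circ$ is split, and the given projective representation $\rho$ becomes an absolutely irreducible linear representation $\g_{\bar K}\to End(V)$. To run the construction of Theorem~1 I first need a \emph{$K$-rational linear} action of $\g:=Lie(G^{ad})$ on $A$. This is obtained by differentiating $\rho$: its differential is a homomorphism $\g\to\mathfrak{pgl}_1(A)=A/K\cdot 1$, and in characteristic zero the reduced trace gives a Lie-algebra splitting $A=K\cdot 1\oplus\mathfrak{sl}_1(A)$ with $\mathfrak{sl}_1(A)\xrightarrow{\sim}A/K\cdot 1$; composing yields a Lie-algebra homomorphism $\rho\colon\g\to\mathfrak{sl}_1(A)\subseteq A$ defined over $K$. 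It is injective, since its kernel is an ideal of the simple algebra $\g$ and the representation is nonzero, and since $\g$ is perfect its base change is exactly the absolutely irreducible representation $\g_{\bar K}\to End(V)$ above.

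Next I would define $\pi\colon A\to A$ over $K$ by the recipe of Definition~1 applied to the triple $\bigl(A,\;\rho\colon\g\to A,\;\text{Killing form of }\g\bigr)$, reading $A$ for $End(V)$ and using the reduced trace form of $A$ in place of the self-duality $\alpha\alpha^{*}$ of $End(V)$:
$$
\pi\colon A\xrightarrow{\sim}A^{*}\xrightarrow{\rho^{*}}\g^{*}\xrightarrow{\kappa^{-1}}\g\xrightarrow{\rho}A .
$$
All three ingredients are defined over $K$, hence so is $\pi$, and $\pi\otimes_K\bar K$ is precisely the map of Definition~1 for $\g_{\bar K}\to End(V)$. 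Theorem~1 therefore applies over $\bar K$: $\pi_{\bar K}^2=c\,\pi_{\bar K}$ for some $c\in\bar K^\times$, and since $\pi$ is defined over $K$ the action of $\Gamma$ forces $c\in K^\times$. Thus $e:=c^{-1}\pi$ is a $K$-rational projector on $A$; from $Im(e)\otimes_K\bar K=Im(\pi_{\bar K})=\rho(\g_{\bar K})$ (Theorem~1, item~2) we get $Im(e)=\rho(\g)$, a Lie subalgebra of $(A,[\cdot,\cdot])$ isomorphic to $\g=Lie(G^{ad})$ via the injective homomorphism $\rho$.

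It remains to identify $Aut(\pi)^\circ$ with $G^{ad}$. Since $\pi$ is defined over $K$, the subgroup scheme $Aut(\pi)\subseteq PGL_1(A)$ and its identity component are defined over $K$. The map $\pi_{\bar K}$ is $(G^{ad})_{\bar K}$-equivariant by Theorem~1 (item~1 and its proof), and as $\pi$ and $\rho$ are defined over $K$ the equivariance identity holds over $K$ as well; hence $\rho$ defines a $K$-morphism $G^{ad}\to Aut(\pi)$, which factors through $Aut(\pi)^\circ$ because $G^{ad}$ is connected. After base change to $\bar K$ this morphism is the isomorphism $G^{ad}_{\bar K}=Aut(\g_{\bar K})^\circ\xrightarrow{\sim}Aut(\pi_{\bar K})^\circ$ of Theorem~1, item~3. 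A morphism of $K$-schemes that becomes an isomorphism after the faithfully flat base change $\bar K/K$ is already an isomorphism, so $G^{ad}=Aut(\pi)^\circ$.

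The one step that genuinely requires care is the first: manufacturing a $K$-rational \emph{linear} representation of $\g$ on $A$ — and checking that it is faithful and that, after base change, it agrees with the representation to which Theorem~1 is applied — out of data that over $K$ is only projective. Everything after that is a formal Galois descent from Theorem~1; the inner-type hypothesis is what guarantees that the relevant Tits algebra $A$ is a central simple algebra over $K$ itself, keeping the statement in its present clean form.
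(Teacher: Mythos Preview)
Your argument is correct and follows the paper's strategy of deducing the result from Theorem~1 by Galois descent. The paper phrases the descent more tersely, twisting the split objects by a cocycle $\xi\in Z^1(K,G_0^{ad})$ and invoking the $G_0^{ad}$-equivariance of $\pi$ to pass to $A$; you instead build $\pi$ directly over $K$ from the reduced-trace lift of $d\rho$ together with the $K$-rational Killing and reduced-trace forms and then verify agreement after base change---a more explicit implementation of the same idea.
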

\begin{proof}
Let $G_0^{ad}$ be the split form of $G^{ad}$. Then $G^{ad}$ can be obtained by Galois descent via a cocycle $\xi \in Z^1(K,G_0^{ad})$, and $A$ can be obtained by Galois descent via the image of $\xi$ in $Z^1(K,PGL(V))$. Now, since all constructions are $G_0^{ad}$-equivariant, the statement follows from Theorem~1.
\end{proof}

A subspace $M$ in $V$ corresponds to $\alpha(M\otimes V^*)$ in $End(V)$. So, in the case of non-trivial Tits algebras, one may call a right ideal $I$ in $A$ such that $\pi(I)I\le I$ an \emph{inner ideal}. Faulkner proved that inner ideals correspond to objects in Tits geometry (see \cite{3}*{Theorem, p.8} and the remark after the theorem); incidence in this setting usually rewrites as a condition on the dimension of the intersection (see \cite{4} and \cite{5} for the cases of $E_6$ and $E_7$).

\end{document}